\newcommand{\Z}{\mathbb Z}
\newcommand{\B}{\mathrm{B}}
\newtheorem*{Prop}{Proposition}
\begin{document}
%
\title{Constructing Large-scale Low-latency Network from Small Optimal Networks}

\author{\IEEEauthorblockN{Ryosuke Mizuno}
\IEEEauthorblockA{Department of Physics\\
 Kyoto University, Kyoto, Japan\\
Email: mizuno@tap.scphys.kyoto-u.ac.jp}
\and
\IEEEauthorblockN{Yawara Ishida}
\IEEEauthorblockA{Research Institute for Mathematical Science\\
Kyoto University, Kyoto, Japan\\
Email: yawara@kurims.kyoto-u.ac.jp}
}


%


\maketitle

\begin{abstract}
The construction of large-scale, low-latency networks becomes difficult as the number of nodes increases.
In general, the way to construct a theoretically optimal solution is unknown.
However, it is known that some methods can construct suboptimal networks with low-latency.
One such method is to construct large-scale networks from optimal or suboptimal small networks, using the product of graphs.
There are two major advantages to this method.
One is that we can reuse small, already known networks to construct large-scale networks.
The other is that the networks obtained by this method have graph-theoretical symmetry, 
which reduces the overhead of communication between nodes.
A network can be viewed as a ``graph'', which is a mathematical term from combinatorics.
The design of low-latency networks can be treated as a mathematical problem 
of finding small diameter graphs with a given number of nodes ( called ``order'' )
and a given number of connections between each node ( called ``degree'' ).
In this paper, we overview how to construct large graphs from optimal or suboptimal small graphs by using graph-theoretical products.
We focus on the case of diameter 2 in particular.
As an example, we introduce a graph of order 256, degree 22 and diameter 2,
which granted us the Deepest Improvement Award at the ``Graph Golf'' competition.
Moreover, the average shortest path length of the graph is the smallest in graphs of order 256 and degree 22. 
\end{abstract}

\begin{IEEEkeywords}
low-latency network, graph theory, the degree-diameter problem, the order-degree problem, Brown's construction, star product.
\end{IEEEkeywords}


%
\IEEEpeerreviewmaketitle

\section{Introduction}
In mathematics, graph theory is the study of graphs, which are mathematical objects
obtained by abstraction of networks.
A {\it graph} $G$ is a network which consists of a set of {\it vertices} $V$ and of {\it edges} $E$.
Nodes and connections between nodes are translated to vertices and edges respectively.
Two distinct vertices $u$ and $v$ of $G$ are connected to each other
if and only if $(u,v) \in E$. 
In this case, we say that $u$ and $v$ are {\it adjacent} and use the notation $u \sim v$.
For example, let $V$ be a set of $(i,j)$ such that $i = 0,1$ and $j=0,1, \ldots ,4$, and let $E$ be a set of $((i,j),(k,l))$
such that if $i=k$ then $j=l \pm 1 \bmod 5$ or if $i \neq k$ then $j = 2l \bmod 5$.
The graph of the given $V$ and $E$ is called {\it Petersen graph} as shown in Figure~\ref{fig:Petersen}.
\begin{figure}[]
\centering
\includegraphics[width=3cm]{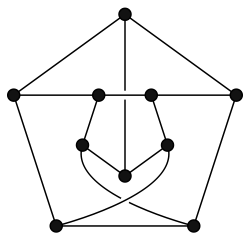}
\caption{ Petersen graph.}
\label{fig:Petersen}
\end{figure}
We can study networks by modeling them as graphs and formulate various problems in terms of graph theory.
For example, the traveling salesman problem, the four color theorem, the max-flow min-cut theorem
and the shortest path problem are well known formulations of problems of networks.
Knowledge gained by the formulation of graph theory is expected to be useful for the study of networks.

The {\it order} $N=|G|=|V|$ and the {\it maximum degree} of vertices $\Delta$ are basic feature values of
graphs, where the degree of a vertex is the number of adjacent vertices to it.
The {\it distance} $d(u,v)$ between $u$ and $v$ in $V$  is the length of the shortest path between them.
The {\it diameter} $D$ of the graph $G$ is the maximum distance of all pairs of vertices;
\[ D = \max_{u,  v \in V} d(u,v). \]
The diameter of Petersen graph is $2$.
Let us turn to some other famous examples of graphs.
Let $V$ be vertices of the $n$-dimensional hypercube over a field such that each component is $1$ or $0$,
and the vertices ${\bm x}$ and ${\bm y}$ in $V$ are adjacent if and only if $1$ or $-1$ appears just once in the components of ${\bm x}-{\bm y}$, and other components of it are $0$.
This graph is called an {\it $n$-dimensional hypercube graph},
which is of $N=2^n, \Delta=n, D=n$.
Let $V$ be ${(\Z/m\Z)}^n$ such that $m > 2$, and two vertices ${\bm x}$ and ${\bm y}$ in $V$ are adjacent
if and only if $1$ or $m-1$ appears just once in the components of ${\bm x}-{\bm y}$, and other components of it are $0$.
This graph is called an {\it $n$-dimensional torus grid graph},
which is of $N=m^n, \Delta=2n, D=n [m/2]$ where this bracket is gauss's symbol.
These two examples are graphs obtained by abstracting known topological structures.
On the other hand, we can also construct graphs directly by using algebraic methods.
Let $L$ be a set of distinct $t$ labels and let the vertices $V$ be $L^n$.
The vertices $(a_1, a_2, \ldots, a_n)$ and $(b_1, b_2, \ldots, b_n)$ in $V$ are adjacent if and only if $a_i = b_{i+1}$ or $a_{i+1} = b_i$ for $i=1,2, \ldots k-1$.
This graph is known as undirected {\it de Brujin graph} of type $(t,n)$
of $N=t^n, \Delta=2t, D=n$~\cite{bruijn1946combinatorial,good1946normal}.

The order, the maximum degree, and the diameter are important indices for measuring
the efficiency of connections between vertices of a graph.
If a graph of given diameter and maximum degree has more vertices, we consider it as a more efficient network.
This claim can be rephrased into two other forms.
One is that a graph of given order and maximum degree is more dense if its diameter is smaller.
The other is that if the maximum degree of a graph of given order and diameter is smaller,
the distribution of the edges is more efficient.
These claims correspond to three optimization problems; {\it the degree-diameter problem},
{\it the order-degree problem} and {\it the order-diameter problem}.
The degree-diameter problem is to find the largest possible number of vertices in a graph of given degree and diameter~\cite{MilSir2005}.
Of the above three, this is the most attractive to mathematicians.
A table of a lower bound of the largest possible number of vertices with given degree and diameter is available online at the Combinatorics Wiki website \cite{combinatoricswiki}.
The order-degree problem is to find graphs with the smallest diameter in graphs of given order and degree.
This problem can be applied for designing of law-latency networks, but a table similar to the degree-diameter problem has not been constructed as of yet. 
Only a very small table is available online at the Graph Golf website \cite{graphgolf}.
The order-degree problem is to find graphs with the smallest maximum degree in a graph of given order and diameter.
Unfortunately, this problem has been somewhat overlooked compared to the others.
These three problems are strongly interrelated.
Therefore we can use the best-studied degree-diameter problem in order to study the others.
In what follows, we review the degree-diameter problem and two methods for constructing graphs of diameter $2$ for the order-degree problem.
One is {\it Brown's construction}, which was originally introduced by W.G. Brown ~\cite{brown1966graphs}
and later generalized by the co-author\cite{gbc} of this paper.
The other is graph-theoretical product known as {\it star product}\cite{bermond1982large,MilSir2005}.
This allows us to introduce the graph of order $256$, degree $22$, and diameter $2$.
Moreover, the average shortest path length of the graph is the smallest in graphs of order $256$ and degree $22$,
where the average shortest path length $ASPL$ is defined as follows;
\[ ASPL = \frac{\sum_{u, v \in V} d(u, v)}{|V| (|V| -1)} \]

\section{The degree-diameter problem}
As mentioned above, the degree-diameter problem is to find
the largest possible number  $n_{\Delta,D}$ of vertices in a graph of given degree $\Delta$ and diameter $D$~\cite{MilSir2005}.
If $G$ is a graph with degree $\Delta$ and diameter $D$, then we get
\[ |G| \leq n_{\Delta,D} \leq 1 + \Delta \sum_{k=0}^{D-1} (\Delta - 1)^k\]
where $|G|$ is the number of vertices of $G$.
The right hand side of the above inequality is called {\it Moore bound}.
The graph whose order equals Moore bound is called {\it Moore graph}.
In the case of $D=2$, Moore graph can exists when $\Delta=2, 3, 7, 57$~\cite{hoffman1960moore}.
In fact, if $\Delta=2, 3, 7$, then Moore graphs are a pentagon, Petersen graph, Hoffman-Singleton graph respectively.
It is not known whether Moore graph exists if $\Delta=57$.
In the case of $D \geq 3$, Moore graph is a cycle graph of length $2D+1$~\cite{damerell1973moore}.
Some general lower bounds of $n_{\Delta, D}$ are known( see ~\cite{MilSir2005} ).
For example, if $\Delta$ is an even number, the undirected de Brujin graph gives a lower bound of $n_{\Delta, D}$;
\[ n_{\Delta, D} \geq {\left( \frac{\Delta}{2} \right)}^D \]

However, exact lower bounds of $n_{\Delta, D}$ are only known for small $\Delta, D$.
Thus, the order-diameter problem for almost all pairs of $\Delta, D$ is unsolved.
Let us consider the case of $\Delta=8, D=8$, where Moore bound is $7686401$.
The $8$-dimensional hypercube graph, the $4$-dimensional torus grid graph over $\Z/5\Z$,
and the undirect de Brujin graph of type $(4,8)$ are examples of $\Delta=8,D=8$.
The order of the $8$-dimensional hypercube graph is $2^8=256$.
The order of the $4$-dimensional torus grid graph over $\Z/5\Z$ is $5^4=625$, 
which is more efficient than $8$-dimensional hypercube.
The order of undirect de Brujin graph of type $(4,8)$ is $4^8=65536$,
which is more efficient than the above two graphs.
The order of the already known suboptimal graph of $\Delta=8, D=8$ is $734820$~\cite{Loz08newrecord}.
The graph is discovered by using a computer.
However it seems to be too small because the percentage of the Moore bound for $\Delta=8, D=8$ is $9.56$\%.
As another example, let us consider the case of $\Delta=20, D=2$.
The order the undirected de Brujin graph of type $(10, 2)$ of $\Delta=20, D=2$ is $10^2=100$.
The order of the already known suboptimal graph  of $\Delta=20, D=2$ is $381$,
where the percentage of the Moore bound for $\Delta=20, D=2$ is $95$\%.
It is obtained by Brown's Construction, which we will discuss in more detail later on.
As mentioned above, the hypercube graph, the torus grid graph, and the de Brujin graph are very simple and easy to construct,
but these graphs are not close to suboptimal.
Thus, when we want more dense or more efficient graphs, these simple graphs are not suitable.
In order to construct more dense or more efficient graphs, there are roughly two ways;
either by a deterministic mathematical construction, or by searching suboptimal graphs using computers.
Furthermore, a deterministic mathematical construction can be divided into two ways.
One way is to construct graphs with more vertices from scratch.
The other is to construct graphs from small, already known suboptimal graphs.

\subsection{The case of diameter $2$}
For the case of diameter $2$, Brown's construction~\cite{brown1966graphs,MilSir2005} gives suboptimal graphs.
This construction gives a graph for each $q$ which is a power of a prime.
Let $F_q$ be a finite field.
Brown's construction gives the graph $\B(F_q)$ where the vertices are lines in $F_q^3$ and two lines are adjacent if and only if they are orthogonal.
It follows that;
\[ |\B(F_q)| = q^2+q+1, \quad \Delta(\B(F_q)) = q+1, \quad D(\B(F_q))=2.\]
The degree of each vertex of $\B(F_q)$ is $q+1$ or $q$.
The reason of $D(\B(F_q))=2$ is that
for all two vectors ${\bm x}$ and ${\bm y}$ there exists a vector ${\bm z}$ orthogonal to ${\bm x}$ and ${\bm y}$ even in finite vector spaces.
Therefore, for ${\Delta}$, if a power of a prime $q$ exists such that $\Delta=q+1$, we get
\[ n_{\Delta, D} \geq \Delta^2 - \Delta + 1. \]
Among $q^2+q+1$ vertices, $q+1$ vertices are of degree $q$ and $q^2$ vertices are of degree $q+1$.
If $q$ is a power of $2$, there exists the graph of $N=q^2+q+2, \Delta=q+1,D=2$~\cite{journals/networks/ErdosFH80}.
Therefore, if $\Delta$ is a power of 2, we have
\[ n_{\Delta, D} \geq \Delta^2 - \Delta + 2. \]
For the remainder $\Delta$, we get suboptimal graphs by duplicating the vertex of the graphs obtained by Brown's construction~\cite{MilSir2005}.
For any $\epsilon >0$ there exists a constant $c_\epsilon$ such that, for any $\Delta$ the following holds;
\[ n_{\Delta, D} \geq \Delta^2 - c_\epsilon \Delta^{19/12+\epsilon}. \]
For the case of diameter $2$ with large maximum degree, the graphs obtained by Brown's construction
are the best suboptimal graphs among already known graphs.

Let us turn to another graph-theoretical technique known as star product, which was introduced by Bermond, Delorme and Farhi\cite{bermond1982large,MilSir2005}.
Let $G_1, G_2$ be graphs.
We fix an arbitrary orientation of all edges of $G_1$ and let $\vec{E}$ be the corresponding set of the fixed arrows of $G_1$.
For each arrow $(u,v) \in \vec{E}$, let $\phi(u,v)$ be a bijection on the set $V(G_2)$.
The vertex set of the star product $G_1 *_\phi G_2$ is thus $V(G_1) \times V(G_2)$,
and the vertex $(u, v)$ is adjacent to $(w, x)$ if and only if either $u = w$ and $(v,x)$ is an edge of $G_2$, or
$(u, w)$ is in $\vec{E}$ and $x=\phi(u,w)(v)$.
Using this product, we can construct some efficient graphs.
For example, we can construct the graph that gives the exact lower bound of order for $\Delta=6, D=2$.
The exact lower bound for $\Delta=6, D=2$ is 32, namely $n_{6,2} = 32$.
Let the graph $G_8$ of order $8$ be as follows;
\[ V=\{(0,0),(0,1),(1,0),(1,1),(1,2),(2,0),(2,1),(2,2)\} \]
and $(i,j), (k,l)$ in $V$ be adjacent if and only if
$(i,k)$ in $\{ (0,1),(1,0),(2,2)\}$, or if $(i,k)=(1,2),(2,1)$ then $j=l$.
$G_8$ is of order $8$, degree $8$, and diameter $2$ as shown in Figure~\ref{fig:g8}.
\begin{figure}[]
\centering
\includegraphics[width=5cm]{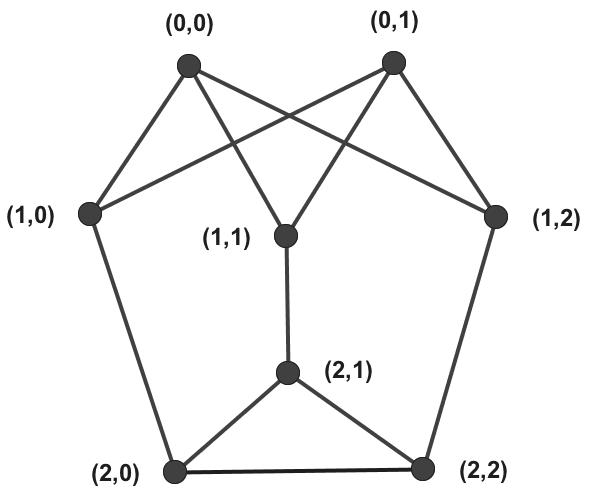}
\caption{ The $G_8$ Construction.}
\label{fig:g8}
\end{figure}
Let $K_n$ be a $n$-complete graph, where each vertex is connected to all other vertices.
The vertices of $K_n$ is labeled by natural numbers, namely $V(K_n)=\{0,1,\ldots, n-1\}$.
Let $\vec{E}$ be $\{(k_1,k_2)| k_1 < k_2\}$ and $\phi$ be defined as follows;
\[
  \phi(k_1,k_2)((i,j)) =
    \begin{cases}
      (0,1-j) & (i=0) \\
      (2,j+1 \bmod 3) & (i=1) \\
      (1,j-1 \bmod 3) & (i=2).
    \end{cases}
\]
Then we get the star product $K_n *_\phi G_8$ with $\phi$, which is of order $8n$, degree $n+2$ such that if $n \geq 3$, the diameter is $2$.
When $n=4$, the $K_4 *_\phi G_8$ is the optimal graph of order $32$ for the degree-diameter problem of $\Delta=6, D=2$.

\section{The order-degree problem}
The order-degree problem is to find the graph with the smallest diameter in graphs of given order and degree.
As mentioned above, the order-degree problem is correlated strongly with the degree-diameter problem.
In fact, for given degree $\Delta$ and diameter $D$, the optimal graph $G$ for the degree-diameter problem is also
optimal for the order-degree problem of order $|G|$ and degree $\Delta$.
If $G$ is not optimal for the order-degree problem, then there exists the graph $G'$ of order $|G|$, degree $\Delta$ and diameter $D' < D$.
Thus, we get $G''$ of order $|G|+1$, degree $\Delta$ and $D'+1 \leq D$ by inserting one vertex into an arbitrary edge of $G'$.
This contradicts the assumption that $G$ is the largest graph of given degree $\Delta$ and diameter $D$.
Therefore, the already known suboptimal graphs  for degree-diameter problem seem to be suboptimal for order-degree problem.
However, the order of optimal or suboptimal graphs of the degree-diameter problem does not cover an arbitrary order.
For example, the facts of $n_{6,2}=32$ and $n_{7,2}=50$ implies nothing more than
 that if $33 \leq |G| \leq 49$ and $D=2$, then $\Delta \geq 7$, or if $33 \leq |G| \leq 49$ and $\Delta=6$, then $D \geq 3$.
Namely, the graphs obtained from the degree-diameter problem cover a very limited number of pairs of order and degree for the order-degree problem.
Thus, we have to construct graphs for missing pairs of given order and degree.
As well as the degree-diameter problem, there are roughly two deterministic ways to construct graphs;
to construct graphs from scratch or from already known small graphs.
We introduce two ways corresponding to them; generalized Brown's construction and multiple star product,
which are the generalization of Brown's construction and star product respectively.
Using multiple star product, we construct the graph of order $256$, degree $22$, and diameter $2$.

\subsection{Generalized Brown's Construction}
We give a generalization of Brown's construction~\cite{gbc} by replacing a finite field $F_q$ with a finite commutative ring $R$ with unity, in particular $\Z/n\Z$.
The vertices $V(\B(R))$ is defined as follows;
\[( R^3 \setminus \{\bm v | \exists r \in R, r \cdot {\bm v} = {\bm 0} \} ) / \sim\]
where $\bm v \sim \bm w$ if and only if there exists $k \in R^*$ such that $k \cdot {\bm v} = {\bm w}$.
The two vertices $[\bm v]$ and $[\bm w]$ are adjacent if and only if ${\bm v} \cdot {\bm w} = 0$.
It is clear that, if $R$ is finite, then so is $\B(R)$.
Moreover if $R$ is a quotient of any Euclidean domain, then the diameter of $\B(R)$ is $2$.
When $R=\Z/n\Z$, there exist prime numbers $p_i$ and natural numbers $k_i > 0$ such that $n=\prod_{i} {p_i}^{k_i}$,
and it follows that;
\[ |\B(\Z/n\Z)| = \prod_i \left( {p_i}^{2k_i}+{p_i}^{2k_i-1}+{p_i}^{2k_i-2} \right)\]
\[ \Delta(\B(\Z/n\Z)) = \prod_i \left( {p_i}^{k_i} + {p_i}^{k_i-1} \right). \]
\[ D(\B(\Z/n\Z)) = 2\]
Therefore, for given order $N$ and degree $\Delta$, if there exists a natural number $n \geq 2$ and $\delta \geq 0$
such that $N=|\B(\Z/n\Z)|+\delta$ and $\Delta \geq \Delta(\B(\Z/n\Z))+\delta$, 
then we have the graph of order $N$, degree $\Delta$ and diameter $2$
by $\delta$ times duplicating vertices of $\B(\Z/n\Z)$ and adding some edges.
We omit the proof of the above discussion because the details will appear in our next paper\cite{nextpaper}.

\subsection{Multiple Star Product}
Let us construct the graph of order $256$, degree $22$, and diameter $2$.
It was necessary to develop a novel method for the construction of this graph because it could not be obtained by generalized Brown's construction nor by the ordinal star product.
For example, the order of $K_{32} *_\phi G_8$ is $256=32 \times 8$ with an arbitrary $\phi$ but the degree is $34=31+3$.
We introduce the new concept of $m$-multiple star product.
Let $G_1, G_2$ be graphs.
We fix an arbitrary orientation of all edges of $G_1$ and let $\vec{E}$ be the corresponding set of the fixed arrows of $G_1$.
For each arrow $(u,v) \in \vec{E}$, let $\psi(u,v,l)$ be a bijection on the set $V(G_2)$ where $l=0,1,\ldots,m-1$.
Then the vertex set of the multiple star product $G_1 *_\psi G_2$ with $\psi$ is $V(G_1) \times V(G_2)$,
and the vertex $(u, v)$ is adjacent to $(w, x)$ if and only if either $u = w$ and $(v,x)$ is an edge of $G_2$, or
$(u, w)$ is in $\vec{E}$ and there exists $l$ such that $x=\psi(u,w,l)(v)$.

We take $K_a, K_b *_\phi G_8$ as $G_1, G_2$ respectively, according to the definition of $\phi$ as seen in the previous section.
Let $\vec{E}$ be $\{(l_1,l_2)| l_1 < l_2\}$ and $\psi$ be defined as follows;
\[
  \psi(l_1,l_2,0)((k,(i,j))) =
    \begin{cases}
      (k,(0,0)) & ((i,j)=(0,0))\\
      (k,(0,1)) & ((i,j)=(1,0))\\
      (k,(1,0)) & ((i,j)=(1,1))\\
      (k,(2,1)) & ((i,j)=(1,2))\\

      (k,(1,1)) & ((i,j)=(0,1))\\
      (k,(2,2)) & ((i,j)=(2,0))\\
      (k,(1,2)) & ((i,j)=(2,1))\\
      (k,(2,0)) & ((i,j)=(2,2))\\
    \end{cases}
\]
\[
  \psi(l_1,l_2,1)((k,(i,j))) =
    \begin{cases}
      (k,(0,1)) & ((i,j)=(0,0))\\
      (k,(1,0)) & ((i,j)=(1,0))\\
      (k,(2,1)) & ((i,j)=(1,1))\\
      (k,(0,0)) & ((i,j)=(1,2))\\

      (k,(2,2)) & ((i,j)=(0,1))\\
      (k,(1,2)) & ((i,j)=(2,0))\\
      (k,(2,0)) & ((i,j)=(2,1))\\
      (k,(1,1)) & ((i,j)=(2,2))\\
    \end{cases}
\]
\[
  \psi(l_1,l_2,2)((k,(i,j))) =
    \begin{cases}
      (k,(1,0)) & ((i,j)=(0,0))\\
      (k,(2,1)) & ((i,j)=(1,0))\\
      (k,(0,0)) & ((i,j)=(1,1))\\
      (k,(0,1)) & ((i,j)=(1,2))\\

      (k,(1,2)) & ((i,j)=(0,1))\\
      (k,(2,0)) & ((i,j)=(2,0))\\
      (k,(1,1)) & ((i,j)=(2,1))\\
      (k,(2,2)) & ((i,j)=(2,2))\\
    \end{cases}
\]
\[
  \psi(l_1,l_2,3)((k,(i,j))) =
    \begin{cases}
      (k,(2,1)) & ((i,j)=(0,0))\\
      (k,(0,0)) & ((i,j)=(1,0))\\
      (k,(0,1)) & ((i,j)=(1,1))\\
      (k,(1,0)) & ((i,j)=(1,2))\\

      (k,(2,0)) & ((i,j)=(0,1))\\
      (k,(1,1)) & ((i,j)=(2,0))\\
      (k,(2,2)) & ((i,j)=(2,1))\\
      (k,(1,2)) & ((i,j)=(2,2))\\
    \end{cases}
\]
Thus, we have the $4$-multiple star product $K_a *_\psi (K_b *_\phi G_8)$.
It can be easily shown that the order and the maximum degree of the graph is $8ab$ and $4a+b-2$ respectively.
We have to prove the remaining property that the diameter of the graph is $2$.
\begin{Prop}
The following equation holds.
\[ D( K_a *_\psi (K_b *_\phi G_8) ) = 2 \]
\end{Prop}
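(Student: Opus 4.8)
The plan is to run a case analysis on the three coordinates of a vertex. Write a vertex of $H:=K_a *_\psi (K_b *_\phi G_8)$ as a triple $(l,k,v)$ with $l\in V(K_a)$, $k\in V(K_b)$ and $v\in V(G_8)$, so $(k,v)$ is a vertex of $G_2:=K_b *_\phi G_8$. First I would record two facts visible from the displayed formulas: the permutation $\phi(k_1,k_2)$ does not depend on the pair $k_1<k_2$ — call it $\pi$ — and each $\psi(\,\cdot\,,\,\cdot\,,j)$ acts on $V(G_2)$ by $(k,v)\mapsto(k,\sigma_j(v))$ for a permutation $\sigma_j$ of $V(G_8)$ not depending on the $K_a$-fibres involved. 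Consequently the edges of $H$ are of three kinds: a \emph{$G_8$-edge} joins $(l,k,v)$ and $(l,k,v')$ when $v\sim v'$ in $G_8$; a \emph{$\phi$-edge} joins $(l,k,v)$ and $(l,k',v')$ with $k\ne k'$, forcing $v'=\pi(v)$ when $k<k'$ and $v'=\pi^{-1}(v)$ when $k>k'$; and a \emph{$\psi$-edge} joins $(l,k,v)$ and $(l',k,v')$ with $l\ne l'$ when $v'=\sigma_j^{\pm1}(v)$ for some $j\in\{0,1,2,3\}$ (sign according to the order of $l,l'$). The decisive point is that \emph{a $\psi$-edge never changes the coordinate $k$}.

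Fix two vertices $P_1=(l_1,k_1,v_1)$, $P_2=(l_2,k_2,v_2)$ and split into three cases. In \emph{Case A}, $l_1=l_2$: the vertices with that value of $l$ induce a copy of $G_2=K_b*_\phi G_8$, which has diameter $2$ for $b\ge 3$ by the fact recalled earlier, and distances in a subgraph are at least those in $H$, so $d_H(P_1,P_2)\le 2$. In \emph{Case B}, $l_1\ne l_2$ but $k_1=k_2$: the vertices with that value of $k$ induce in $H$ exactly the $4$-multiple star product $K_a *_\sigma G_8$ (only $G_8$- and $\psi$-edges survive), so it suffices to bound its diameter; a common neighbour of two vertices lying in different fibres can be sought in either of the two fibres containing them, so this reduces to the finite verification, for the explicit $\sigma_j$, that from any vertex of $G_8$ every vertex of the two-fibre copy $K_2*_\sigma G_8$ is reached in at most two steps — and that settles all $a\ge2$ at once, since only two fibres are used.

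\emph{Case C}, $l_1\ne l_2$ and $k_1\ne k_2$, is the heart of the matter. Because $\psi$-edges preserve $k$, $P_1$ and $P_2$ are non-adjacent, and in any walk $P_1-M-P_2$ the midpoint $M$ must lie in the fibre of $P_1$ or of $P_2$: a midpoint in a third fibre would make both edges $\psi$-edges, keeping $k$ constant and contradicting $k_1\ne k_2$. If $M$ is in the fibre of $P_1$, then $P_1M$ is a $\phi$-edge, which pins $M=(l_1,k_2,v_M)$ with $v_M=\pi(v_1)$ or $\pi^{-1}(v_1)$ according to whether $k_1<k_2$, and $MP_2$ is a $\psi$-edge, which exists iff $v_2\in\{\sigma_j^{\pm1}(v_M):j=0,\dots,3\}$; the sub-case $M$ in the fibre of $P_2$ is symmetric. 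So $d_H(P_1,P_2)\le2$ precisely when the eight values $\{\sigma_j^{\pm1}(\pi^{\pm1}(v_1))\}_j$ and $\{\pi^{\mp1}(\sigma_j^{\pm1}(v_1))\}_j$ together exhaust $V(G_8)$, for whichever of the sign patterns is in force (four of them, reduced to two by swapping $P_1\leftrightarrow P_2$). I would finish by writing out $\pi$ and $\sigma_0,\dots,\sigma_3$ as explicit permutations of the eight vertices of $G_8$ and checking this covering for each remaining sign pattern and each starting vertex.

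The hard part will be that final check in Case C: there is no slack, since exactly eight candidate midpoints must cover the eight vertices of $G_8$, so the two length-four orbits produced by the $\sigma_j$'s (one of them twisted by $\pi$) have to be complementary — which is exactly the property the special definitions of $\phi$ and $\psi$ were chosen to guarantee, and is where the structure of $G_8$ enters. It is a bounded computation, conveniently done from the explicit permutation tables (or checked by computer); everything else in the argument is structural.
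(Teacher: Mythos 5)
Your proposal is correct and follows essentially the same route as the paper's proof: the same case split on whether $l_1=l_2$ and $k_1=k_2$, the same observation that a midpoint must lie in one of the two relevant fibres because $\psi$-edges preserve $k$ while $\phi$-edges preserve $l$, and the same reduction to a finite covering check on $V(G_8)$ using the explicit permutations. The paper's sets $A,B,C,D$ with $V(G_8)=A\sqcup B=C\sqcup D$ are exactly the executed form of the verification you defer — the four $\sigma_j$ carry each point of $A$ onto all of $C$ and each point of $B$ onto all of $D$, while $\pi$ swaps $A$ with $B$ and preserves $C$ and $D$ setwise, which is precisely the complementarity of the two length-four orbits you identify as the crux (and note that in your Case~C both candidate-midpoint sets should carry the \emph{same} power of $\pi$, determined by the order of $k_1,k_2$, rather than opposite ones).
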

\begin{proof}
The strategy of this proof is straightforward;
traveling from one arbitrary vertex to any other takes two steps at most.
We show that, for all $l_1, l_2$ in $V(K_a)$, $k_1, k_2$ in $V(K_b)$, $u, v$ in $V(G_8)$,
$(l_1,(k_1,u)) \sim (l_2,(k_2, v))$ holds or there exists $w$ in $V(K_a *_\psi (K_b *_\phi G_8))$ such that $(l_1,(k_1,u)) \sim w \sim (l_2,(k_2, v))$.
It is sufficient to prove that $l_1 \neq l_2$ because the diameter of $K_b *_\phi G_8$ is $2$.
Let us define $A, B, C, D \subset V(G_8)$ as follows;
\[A=\{(0,0),(1,0),(1,1),(1,2)\}\]
\[B=\{(0,1),(2,0),(2,1),(2,2)\}\]
\[C=\{(0,0),(0,1),(1,0),(2,1)\}\]
\[D=\{(1,1),(2,2),(1,2),(2,0)\}\]
Thus, we get $V(G_8) = A \sqcup B = C \sqcup D$.
Additionally, let $G$ be a graph and $S, T \subset V(G)$, we use the notation $S \sim T$
if and only if for all $s$ in $S$ there exists $t$ in $T$ such that $s \neq t$, $s\sim t$, and
for all $t$ in $T$ there exists $s$ in $S$ such that $t \neq s$, $t \sim s$.
Using this adjacency of subsets of vertices, for example that $C \sim D$ holds in $G_8$.
We use the further notation $(l, k, S) = \{ (l, k, s) | s \in S\}$.
For $k_1 \neq k_2$, the following holds;
\[ (l_1,k_1,A) \sim_\psi (l_2,k_1,C) \sim_\phi (l_2,k_2,C) \]
\[ (l_1,k_1,A) \sim_\phi (l_1,k_2,B) \sim_\psi (l_2,k_2,D) \]
\[ (l_1,k_1,B) \sim_\psi (l_2,k_1,D) \sim_\phi (l_2,k_2,D) \]
\[ (l_1,k_1,B) \sim_\phi (l_1,k_2,A) \sim_\psi (l_2,k_2,C) \]
where $\sim_\psi$ and $\sim_\phi$ are the adjacencies that results from by $\psi$ and $\phi$ respectively.
For example, $(l_1,k_1,A) \sim_\psi (l_2,k_1,C)$ means that
for all $a$ in $A$ there exists $c$ in $C$ and a natural number $n \leq 3$, $\psi(l_1, l_2, n)((k_1,a)=(k_1,c)$,
and for all $c$ in $C$ there exists $a$ in $A$ and a natural number $n \leq 3$, $\psi(l_1, l_2, n)((k_1,a)=(k_1,c)$.
Therefore, there exists $W \subset V(K_a *_\psi (K_b *_\phi G_8))$
such that $(l_1,k_1,V(G_8)) \sim W \sim (l_2,k_2,V(G_8))$.
Namely, if $k_1 \neq k_2$, for all $u, v$ in $(V(G_8))$, traveling from $(l_1,k_1,u)$ to $(l_2,k_2,v)$ takes just two steps.
For $k_1 = k_2 = k$, the following holds;
\[ (l_1,k,A) \sim_\psi (l_2,k,C) \sim_{G_8} (l_2,k,D) \]
\[ (l_1,k,B) \sim_\psi (l_2,k,D) \sim_{G_8} (l_2,k,C) \]
where $(l_2,k,C) \sim_{G_8} (l_2,k,D)$ means that $C \sim D$ in $G_8$.
The above adjacency implies that
for all $u, v$ in $V(G_8)$ traveling from $(l_1,k,u)$ to $(l_2,k,v)$ takes two steps at most.
\end{proof}
It is clear that the diameters of these graphs $K_a *_\psi (K_b *_\phi G_8)$ are the smallest in graphs of order $8ab$ and degree $4a+b-2$.
Moreover, the average shortest path lengths of the graphs are the smallest in graphs of order $8ab$ and degree $4a+b-2$ because the graphs are regular graphs and the diameters of these are $2$,
where ``regular'' means that all degree of a vertex is the same.
The percentage of the order of the Moore bound is as follows;
\[ \frac{8ab}{(4a+b-2)^2+1} \geq \frac{1}{1+\frac{16a^2+b^2}{8ab}} \]
If $4a=b$, then the percentage is greater than $50$\%. 

When $a=4$ and $b=8$, we get $K_4 *_\psi (K_8 *_\phi G_8)$, which is the graph of order $256$, degree $22$ and diameter $2$.

\section{Conclusion}
We overviewed the degree-diameter problem, which has been best-studied,
and showed two deterministic ways to construct graphs of diameter $2$ for the order-degree problem.
In the first way, we can construct low hop-count graphs for infinite number of pairs of order and degree, with graph-theoretic symmetry.
In the second way, we can automatically obtain large-scale graphs from smaller graphs. 
The order and degree of graphs obtained by these methods are limited to a set of values that satisfy specific relations.
Searching for appropriate graphs using the computer has some merit because it can construct graphs for all pairs of order and degree.
This method is suitable for large diameter graphs because deterministic methods for such cases are not well known.
But computer-based research has a disadvantage that, as the vertices of graphs increase,
searching and calculation of graphs become harder and less time efficient.
Therefore, these two methods for the construction of graphs have to be studied and developed simultaneously.
Similarly to the degree-diameter problem, it is necessary to create the table of greater bounds for the order-degree problem,
which has been somewhat overlooked in the field.
We are planning to make such a table available online in the near future.

\section*{Acknowledgment}
We would like to thank Miikael-Aadam Lotman and Masanori Kanazu.



\bibliographystyle{IEEEtran}
\bibliography{IEEEabrv,ref}
%
\end{document}